\providecommand{\U}[1]{\protect\rule{.1in}{.1in}}
\providecommand{\U}[1]{\protect\rule{.1in}{.1in}}
\newtheorem{theorem}{Theorem}
\newtheorem{lemma}[theorem]{Lemma}
\newcommand{\bC}{\mathbb{C}}
\newcommand{\ee}{\end{equation}}
\newcommand \PP {\mathcal P}
\newcommand \ov {\overline}
\newcommand \Sym {\text{Sym}}
\begin{document}
\title[Topological classification of real meromorphic functions]{Topological classification of generic real meromorphic functions from compact surfaces}
\author[A.~F.~Costa]{Antonio F. Costa}
\address{Departamento de Matem\'aticas Fundamentales, Facultad de Ciencias, UNED, C.
Senda del rey, 9, 28040 Madrid, Spain}
\email{acosta@mat.uned.es}
\author[S.~Natanzon]{Sergey Natanzon}
\address{National Research University Higher School of Economics (HSE), 20 Myasnitskaya
ulitsa, Moscow 101000, Russia}
\email{natanzons@mail.ru}
\author[B.~Shapiro]{Boris Shapiro}
\address{Department of Mathematics, Stockholm University, SE-106 91 Stockholm, Sweden}
\email{shapiro@math.su.se}
\date{\today }
\keywords{real meromorphic functions, gardens and parks, Hurwitz numbers}
\subjclass[2010]{Primary 14P25, Secondary 14E22, 37F10}

\begin{abstract}
In this article, to each generic real meromorphic function (i.e., having only
simple branch points in the appropriate sense) we associate a certain
combinatorial gadget which we call the park of a function. We show that the
park determines the topological type of the generic real meromorphic function
and the set of parks produce an stratification of the space of generic real
meromorphic functions. For any of the above topological types, we introduce
and calculate the corresponding Hurwitz number. Finally we relate the
topological types of generic real meromorphic functions with monodromy of
orbifold coverings.

\end{abstract}
\maketitle

\numberwithin{equation}{section}

%\dedicatory{If you can't say it in 50 characters, then don't b}

\section{Introduction}

\label{sec1}

The main object of study in the present paper is the Hurwitz space
$\mathbb{R}H$ of real meromorphic functions on compact surfaces. Its structure
is substantially more complicated than that of the classical Hurwitz space $H$
of complex meromorphic functions. The latter space consists of pairs $(P,f)$
where $P$ is a complex algebraic curve, i.e., a compact orientable Riemann
surface and $f:P\rightarrow\ov {\mathbb{C}}$ is a meromorphic function, i.e.,
a holomorphic map to the Riemann sphere. Each point $(P,f)$ may be described
by polynomial equations with complex coefficients providing a natural topology
to the space $H$.

In applications one often encounters generic meromorphic functions, i.e.,
functions forming a dense subset $H^{0}\subset H$ which is stable with respect
to arbitrary small perturbations of functions. For complex meromorphic
functions, $H^{0}$ consists of functions with all simple {critical values}. In
other words, $H^{0}$ contains all functions $f$ whose branchings are of second
order and such that the images of all branchings under $f$ are pairwise
distinct, i.e., the critical values of $f$ are simple.

According to the basic classical results of Hurwitz \cite{Hu}, the space $H$
may stratified an each stratum consists of all functions of a given degree $d
$ on curves of a given genus $g$. (Such stratum is denoted by $H_{g,d}$.)
Analogously we can stratify $H^{0}$ by strata containing generic complex
functions of degree $d$ on curves of genus $g$. (denoted by $H_{g,d}^{0}$.)
Obviously, $H_{g,d}^{0}\subset H_{g,d}$ and one can show that the difference
$H_{g,d}\setminus H_{g,d}^{0}$ has real codimension two in $H_{g,d}$ which
explains why strata of $H$ are in $1-1$-correspondence with that of $H^{0}$.

Critical values of functions from $H_{g,d}^{0}$ provide the topology and
complex structure on $H_{g,d}^{0}$ (and on $H_{g,d}$ as well) induced by $H$.
The number of functions belonging to $H_{g,d}^{0}$ with a given set of
$2d+2g-2$ distinct critical values was defined and in many cases calculated by
Hurwitz in \cite{Hu}. (Observe that $2d+2g-2$ is also the dimension of
$H_{g,d}^{0}$ over $\mathbb{C}$.) The latter number of functions is
classically referred to as the corresponding \emph{Hurwitz number}. Hurwitz
numbers play important role in modern algebraic geometry and mathematical physics.

The space $H$ is endowed with a natural involution $\tau_{H}:H\rightarrow H$
anti-holomorphic with respect to the complex structure on $H$. This involution
associates to a pair $(P,f)$ the pair $(\ov P,\ov f)$, where the Riemann
surface $\ov P$ is obtained from $P$ by substituting all local charts on $P$
by their complex-conjugates and by using $\ov f(z)=\ov {f(z)}$. The set of
fixed points of $\tau_{H}$ coincides with the space $\mathbb{R}H$ of all real
meromorphic functions.

Let us now discuss the structure of $\mathbb{R} H$. By definition, for
$(P,f)\in\mathbb{R} H$, one has that $\tau_{H}(P,f)=(P,f)$. Therefore,
$\tau_{H}$ generates an anti-holomorphic involution $\tau: P\to P$ such that
$f (\tau(p))=\ov {f(p)}$. The category of such pairs $(P,\tau)$ is isomorphic
to the category of real algebraic curves, see \cite{AG}. Under this
isomorphism, real algebraic functions correspond to morphisms of real
algebraic curves to $(\ov {\mathbb{C}}, conj),$ where $conj(z)=\bar z$.

The set of strata of $\mathbb{R}H$ is substantially more complicated than that
of $H$, {see \cite{B,CR,CIR,N1,N2,N3,N4}}. (We will discuss these strata in
details in \S ~\ref{sec5}.) As in the complex case, the most important real
meromorphic functions for applications are generic ones; these functions are
the main object of study here.

As in the complex case, they form an open, dense and stable with respect to
arbitrary small deformations subset $\mathbb{R} H^{0}\subset\mathbb{R} H$. But
in the real situation the definition of $\mathbb{R} H^{0}$ becomes more
involved. In particular, $\mathbb{R} H^{0}$ does not coincide with $\mathbb{R}
H\cap H^{0}$ since the closure of the latter intersection is strictly smaller
than $\mathbb{R} H$. To improve the situation, one has to define $\mathbb{R}
H^{0}$ as the the set of all real meromorphic functions whose branchings are
of the second order and for the images of these branchings, i.e., for the
critical values, one should require that

\noindent(i) critical values are distinct when they are non-real;

\noindent(ii) critical values are the images of at most two branchings when
they are real.

In the latter case, when there are two branchings mapped to the same real
critical value, they must necessarily be interchanged by the involution $\tau$
on $P$. Therefore, the set $\mathbb{R}H^{0}$ defined in the above way is
stable under arbitrary small deformations inside the class of real meromorphic
functions. On the other hand, $\mathbb{R}H^{0}$ is dense in $\mathbb{R}H$.
Indeed, for any given real meromorphic function, using small perturbations
symmetric with respect to $\tau$, one can achieve that all branchings of its
small perturbations will be of order two and, therefore, belong to
$\mathbb{R}H^{0}$.

Contrary to the complex case, there are many more strata of the space
$\mathbb{R}H^{0}$ than in the space $\mathbb{R}H$ which, in the first place,
is related to the fact that $\mathbb{R}H\setminus\mathbb{R}H^{0}$ has real
codimension one. To describe the topological invariants distinguishing strata
of $\mathbb{R}H^{0}$, we have to introduce a rather complicated combinatorial
gadget which we call a \emph{park of a real meromorphic function}. This notion
is similar to the notion of a garden introduced in our earlier paper
\cite{NSW} and which describes strata in the space $\mathbb{R}H\cap
H_{0,d}^{0}$ of real generic rational functions. (The notion of a park is
substantially more complicated.) In \cite{NS} there is a classification of a
particular class of generic real meromorphic functions and now we shall
present the complete classification of these functions.

The structure of this paper is as follows. In \S ~\ref{sec2} we define a park
which, as we will see later, uniquely determines the topological type of
generic real meromorphic functions. In \S ~\ref{sec3} we explain how to
associate a park to an arbitrary generic real meromorphic function. In
\S ~\ref{sec4} we show that given an arbitrary abstract park $\mathcal{P}$, we
can always find a generic real meromorphic function whose associated park
coincides with $\mathcal{P}$. Let $H(\mathcal{P})$ the space of functions
corresponding to a given park $\mathcal{P}$. We show that $H(\mathcal{P})$ is
connected. In \S ~\ref{sec5} we recall some basic information about the
classification of real meromorphic functions, see \cite{N1,N2,N3} and explain
how to find the strata of the space $\mathbb{R}H$ coinciding with the sets
$H(\mathcal{P})$. In \S ~\ref{sec6} we define and calculate the corresponding
Hurwitz number for $H(\mathcal{P})$. Finally, in \S ~\ref{sec7} we explain how
parks are related to the monodromy of coverings of orbifolds.

\noindent\textbf{Acknowledgements.} The article was prepared within the
framework of the Academic Fund Program at the National Research University
Higher School of Economics (HSE) in 2017- 2018 (grant 17-01-0030) and by the
Russian Academic Excellence Project "5-100". The first author was supported by
the Spanish Ministry Project MTM2014-55812-P while the third author wants to
acknowledge the hospitality and the financial support of the Higher School of
Economics during his visit to Moscow in Spring 2014.

%%%%%%%%%%%%%%%%%%%%%%%%%%%%%%%%%%%%%%%%%%%%%%%%%%%%%%%%%%%%%%

\numberwithin{equation}{section}

%\dedicatory{If you can't say it in 50 characters, then don't b}

\section{Gardens and parks}

\label{sec2}

In order to motivate the definition we shall start with an example.

\textbf{Example 1}. Let $f:P\rightarrow\ov{\mathbb{C}}$ be a generic real
meromorphic function from a Riemann surface $P$ of genus $8$ such that $f$ has
two real critical values. Now we construct a combinatorial object that will
determine the topological type of $(P,f)$.

Consider in $\ov{\mathbb{C}}$ the hemispheres $\mathbb{C}_{+}$ and
$\mathbb{C}_{-}$ of complex numbers with positive and negative imaginary part
respectively. Let $U$ be a symmetric annular neighbourhood of the real cycle
$\ov {\mathbb{R}}$ which contains only real critical values. Its boundary
$\partial U$ consists of the contours $\mathcal{C}_{+}\subset\mathbb{C}_{+}$
and $\mathcal{C}_{-}\subset\mathbb{C}_{-}$. We shall call $D_{+}%
\subset\mathbb{C}_{+}$ and $D_{-}\subset\mathbb{C}_{-}$ the two connected
components of $\ov{\mathbb{C}}\setminus U$.

We consider the set $S=f^{-1}(\mathcal{C}_{+})\cup f^{-1}(\mathcal{C}_{-})$ of
simple contours in $P$. We cut the surface $P$ by the contours in $S$, we
assume that in our example the result is:

\begin{itemize}
\item a sphere with four holes $P_{R}=f^{-1}(U)$ ($f$ restricted to $P_{R}$ is
a two fold covering of the annulus $U$) and

\item two (possible non-connected) surfaces with boundary: $P^{+}=f^{-1}%
(D_{+})$ and $P^{-}=f^{-1}(D_{-})$.
\end{itemize}

The covering $f:P_{R}\rightarrow U$ can be extended to a real rational
meromorphic function $g$ which is described by an object called garden. In
this case the garden is given by the preimage by $g$ of $\ov {\mathbb{R}}$,
decomposing $\ov{\mathbb{C}}$ in four regions and a bicolouration of the four
regions (white for the preimage of $\mathbb{C}_{+}$ and black for the preimage
of $\mathbb{C}_{-}$.) In other cases there are other elements but that there
are not essential in our example: the enumeration of the preimage of real
critical values and some numbers related with the degree of $f$ on the
preimage of real points.%

%TCIMACRO{\FRAME{ftbpFU}{5.6853in}{2.1672in}{0pt}{\Qcb{Garden of Example 1. The
%bicolouration of the faces is marked using the letters W and B}}{}%
%{figex1.eps}{\special{ language "Scientific Word";  type "GRAPHIC";
%maintain-aspect-ratio TRUE;  display "USEDEF";  valid_file "F";
%width 5.6853in;  height 2.1672in;  depth 0pt;  original-width 5.6265in;
%original-height 2.1274in;  cropleft "0";  croptop "1";  cropright "1";
%cropbottom "0";  filename '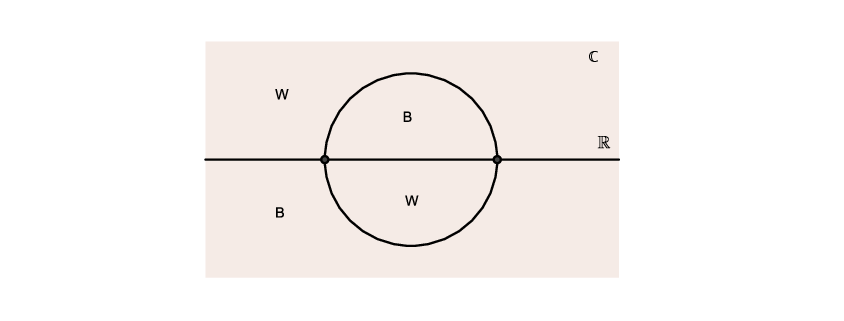';file-properties "XNPEU";}} }%
%BeginExpansion
\begin{figure}[ptb]%
\centering
\includegraphics[
height=2.1672in,
width=5.6853in
]%
{}%
\caption{Garden of Example 1. The bicolouration of the faces is marked using
the letters W and B}%
\end{figure}
%EndExpansion

Our results will imply that the topological type of $(P,f)$ depends on the
number of connected components of $P^{+}$ and $P^{-}$ (the number of entrances
and exits), the genera of the components of $P^{+}$ and $P^{-}$ (weights on
entrances and exists), the way of connecting the boundaries of $P^{+}$ and
$P^{-}$ with the boundaries of $P_{R}$ (encoded by alleys connecting the white
regions with entrances and the black regions with exits.) For our example we
may assume one entrance and one exit of genus three that must be joined by
alleys as shown in the Figure 2.%

%TCIMACRO{\FRAME{ftbpFU}{4.5515in}{2.7138in}{0pt}{\Qcb{Park of Example 1}}%
%{}{figex2.eps}{\special{ language "Scientific Word";  type "GRAPHIC";
%maintain-aspect-ratio TRUE;  display "USEDEF";  valid_file "F";
%width 4.5515in;  height 2.7138in;  depth 0pt;  original-width 4.4987in;
%original-height 2.6723in;  cropleft "0";  croptop "1";  cropright "1";
%cropbottom "0";  filename '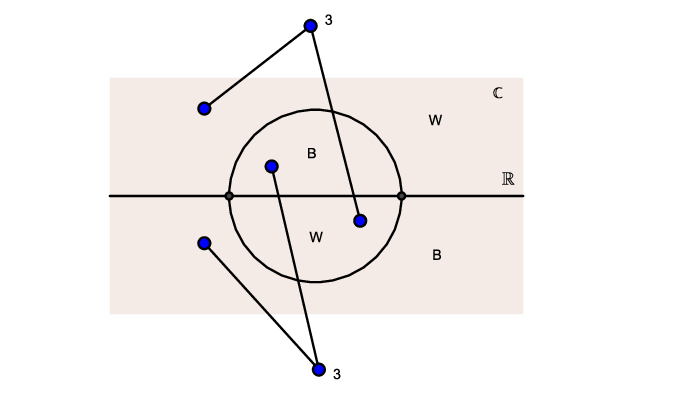';file-properties "XNPEU";}} }%
%BeginExpansion
\begin{figure}[ptb]%
\centering
\includegraphics[
height=2.7138in,
width=4.5515in
]%
{}%
\caption{Park of Example 1}%
\end{figure}
%EndExpansion

\bigskip

To define our main combinatorial object: park, we need four preliminary
concepts: orientable garden, oriented separated garden, non-orientable garden
and non-oriented separated garden.

\subsection{Orientable gardens}

A \emph{semi-garden} $S\mathcal{G}$ is an structure composed by an oriented
surface, chords, simple contours, faces, edges, a colouration of faces,
lengths of edges and a cyclic order of the vertices. Now we describe each one
of such ingredients. First a compact {connected} oriented differentiable
surface $P$ with {or without} boundary together with a collection of
\textit{chords}, i.e., non-selfintersecting curves connecting points on the
boundary of $P$ and \textit{simple contours}, i.e., closed
non-selfintersecting curves embedded in $P$. Connected components of the
boundary of $P$, the chords and the contours of $S\mathcal{G}$ may intersect
transversely, but can not be tangent to each other. The set of chords and
simple contours splits $P$ into simply-connected domains called \emph{faces}.
Intersection points of chords and contours together with the endpoints of
chords are called \textit{vertices}. Vertices split the boundary of each face
into curve segments or closed curves called \emph{edges}. One additionally
requires that the end points of chords are pairwise distinct and that any
vertex which is an intersection point of chords and/or contours lies in the
closure of exactly four edges.

\medskip A semi-garden $S\mathcal{G}$ is additionally equipped with

\noindent(i) black and white colouring of the faces such that every two
neighbouring faces have different colours;

\noindent(ii) positive integers (\textquotedblleft lengths") assigned to its edges;

\noindent(iii) cyclic order on {the set of all vertices}.

\bigskip%

%TCIMACRO{\FRAME{ftbpFU}{4.619in}{2.7138in}{0pt}{\Qcb{A semi-garden: $P$ has
%genus $0$ and three boundary components. The sides $a$ and $a^{\prime}$ must
%be identified.}}{}{figorigar.eps}{\special{ language "Scientific Word";
%type "GRAPHIC";  maintain-aspect-ratio TRUE;  display "USEDEF";
%valid_file "F";  width 4.619in;  height 2.7138in;  depth 0pt;
%original-width 4.5671in;  original-height 2.6723in;  cropleft "0";
%croptop "1";  cropright "1";  cropbottom "0";
%filename '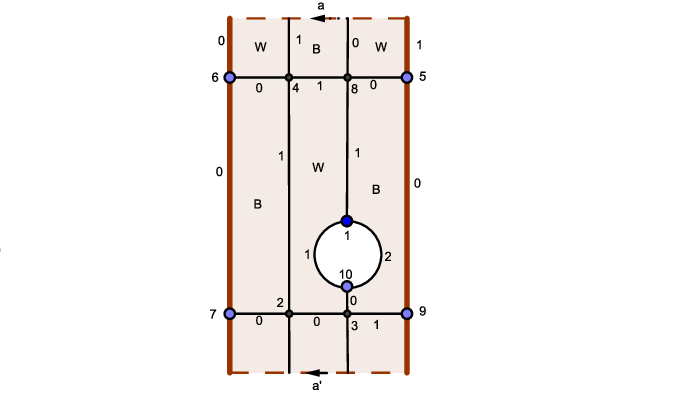';file-properties "XNPEU";}} }%
%BeginExpansion
\begin{figure}[ptb]%
\centering
\includegraphics[
height=2.7138in,
width=4.619in
]%
{}%
\caption{A semi-garden: $P$ has genus $0$ and three boundary components. The
sides $a$ and $a^{\prime}$ must be identified.}%
\end{figure}
%EndExpansion

Let $P^{\prime}$ be the surface $P$ with the opposite orientation. The
semi-garden obtained considering the surface $P^{\prime}$, the chords, simple
contours, lengths of edges and cyclic order of the edges of $S\mathcal{G}$ but
interchanging black and white colours of faces, is called the \emph{conjugate}
semi-garden $\overline{S\mathcal{G}}$ of $S\mathcal{G}$.

\bigskip

By an \emph{orientable garden} we call a pair of conjugate semi-gardens
$S\mathcal{G}$ and $\overline{S\mathcal{G}}$ {defined on surfaces with
non-empty boundary}. To an orientable garden we associate a Riemann surface
$\widetilde{P}$ obtained in the following way. Let $P$ and $P^{\prime}$ be the
surfaces corresponding to $S\mathcal{G}$ and $\overline{S\mathcal{G}}$
respectively, we consider $P$ and $P^{\prime}$ as disjoint surfaces with an
orientation reversing diffeomorphism $h:P\rightarrow P^{\prime}$ sending the
elements of the orientable garden $S\mathcal{G}$ to the elements of
$\overline{S\mathcal{G}}$ (the diffeomorphism $h$ sends black faces to white
faces) and we identify the points of the boundaries of $P$ with the ones of
$P^{\prime}$ by the map $h$. The surface $\widetilde{P}$ has an
orientation-reversing involution $\nu$ (given by $h$ and $h^{-1}$) sending the
initial semi-garden ot its conjugate. The quotient surface $\widetilde{P}/\nu$
is an orientable surface isomorphic to $P$. Thus an orientable garden is given
by a compact orientable surface $\widetilde{P}$ without boundary with an
anticonformal involution $\nu$ such that $\widetilde{P}/\nu$ is orientable and
an additional structure provided by a semi-garden on $\widetilde{P}/\nu$.%

%TCIMACRO{\FRAME{ftbpFU}{5.2918in}{2.5028in}{0pt}{\Qcb{An orientable garden. To
%obtain $\protect\widetilde{P}$ it is necessary to identify $a$ with
%$a^{\prime}$, $b$ with $b^{\prime}$, $c$ with $c^{\prime}$ and $e$ with
%$e^{\prime}$, producing a surface of genus $2$.}}{}{figorigar2.eps}%
%{\special{ language "Scientific Word";  type "GRAPHIC";
%maintain-aspect-ratio TRUE;  display "USEDEF";  valid_file "F";
%width 5.2918in;  height 2.5028in;  depth 0pt;  original-width 5.2356in;
%original-height 2.4613in;  cropleft "0";  croptop "1";  cropright "1";
%cropbottom "0";  filename '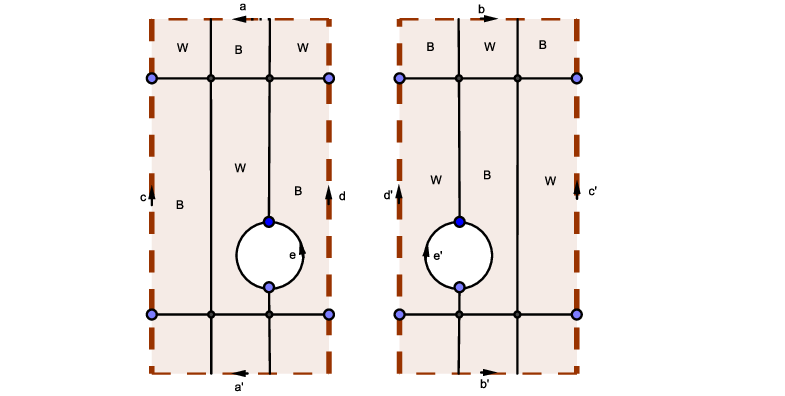';file-properties "XNPEU";}} }%
%BeginExpansion
\begin{figure}[ptb]%
\centering
\includegraphics[
height=2.5028in,
width=5.2918in
]%
{}%
\caption{An orientable garden. To obtain $\protect\widetilde{P}$ it is
necessary to identify $a$ with $a^{\prime}$, $b$ with $b^{\prime}$, $c$ with
$c^{\prime}$ and $e$ with $e^{\prime}$, producing a surface of genus $2$.}%
\end{figure}
%EndExpansion

In Figure 4 appears an orientable garden associated to a genus 2 surface, the
corresponding semigardens are the one represented in Figure 3 and its conjugate.

\subsection{Oriented separated garden}

%\subsection{Oriented simplest and separated garden}
An \textit{elementary orientable garden} is given by the Riemann sphere with
the standard complex conjugation $\sigma$ and a given integer
\textquotedblleft length\textquotedblright\ of the real cycle (the
corresponding semi-gardens have neither chords nor simple contours.) Now
consider a pair of elementary orientable gardens $\{(\mathcal{G}_{i}%
,\sigma_{i}):i=0,1\}$ with equal lengths of their real cycles. Take the
identical isomorphisms $\phi_{i}:(\mathcal{G}_{i},\sigma_{i})\rightarrow
(\mathcal{G}_{1-i},\sigma_{1-i})$ and the involution $\sigma:=\phi_{i}%
\sigma_{i}$ of $\mathcal{G}_{0}\cup\mathcal{G}_{1}$.
%generated by it.
Call the obtained structure $(\mathcal{G}_{0}\cup\mathcal{G}_{1},\sigma)$ an
\emph{oriented separated garden}.

\subsection{Non-orientable gardens}

{A \textit{symmetric semi-garden} $SS\mathcal{G}$ is a pair consisting of a
semi-garden on a surface $P$ {without cyclic enumeration of its vertices} and
an orientation-reversing involution $\tau:P\rightarrow P$ {without fixed
points} which changes the colour of the faces and preserves the
\textquotedblleft lengths\textquotedblright\ of the edges. In{\ a symmetric
semi-garden $SS\mathcal{G}$ the cyclic enumeration of the vertices of a
semi-garden is substituted by the cyclic numeration of pairs of vertices which
are interchanged by the involution $\tau$.} }

A \emph{non-orientable garden} is {a symmetric semi-garden $SS\mathcal{G}{\ }
$and an associated orientable surface }$\widetilde{P}${\ obtained identifying
all pairs of points on the boundary of $P$ interchanged by $\tau$. The
involution {$\tau$ induces an orientation reversing involution }%
}$\widetilde{{\tau}}$ {on }$\widetilde{P}${\ and }$\mathrm{Fix}%
(\widetilde{{\tau}})$ does not separates $\widetilde{P}$, then $\widetilde{P}%
/\widetilde{{\tau}}$ is non-orientable.

Thus a non-orientable garden is given by a compact connected orientable
surface $\widetilde{P}$ without boundary, an orientation reversing involution
$\widetilde{{\tau}}$ such that $\widetilde{P}/\widetilde{{\tau}} $ is
non-orientable and with a symmetric semi-garden structure on $\overline
{\widetilde{P}-\mathrm{Fix}(\widetilde{{\tau}})}$. Note that $\mathrm{Fix}%
(\widetilde{{\tau}})$ may be empty and in this case {$P$ has no boundary.}%

%TCIMACRO{\FRAME{ftbpFU}{4.7167in}{2.3756in}{0pt}{\Qcb{A symmetric semi-garden.
%The sides $a$ and $a\prime$, $b$ and $b\prime$ must be identified to obtain a
%torus with two boundary components.}}{}{fignorgar.eps}%
%{\special{ language "Scientific Word";  type "GRAPHIC";
%maintain-aspect-ratio TRUE;  display "USEDEF";  valid_file "F";
%width 4.7167in;  height 2.3756in;  depth 0pt;  original-width 4.6639in;
%original-height 2.3359in;  cropleft "0";  croptop "1";  cropright "1";
%cropbottom "0";  filename '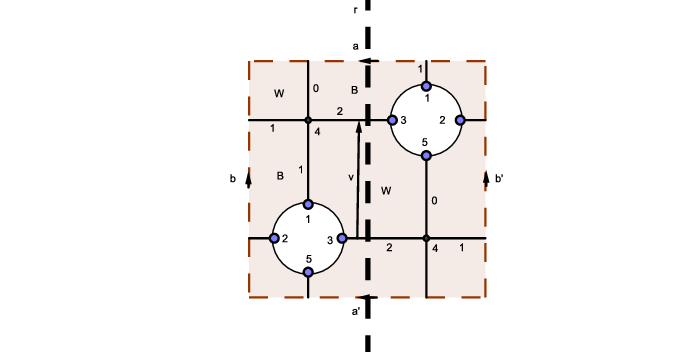';file-properties "XNPEU";}} }%
%BeginExpansion
\begin{figure}[ptb]%
\centering
\includegraphics[
height=2.3756in,
width=4.7167in
]%
{}%
\caption{A symmetric semi-garden. The sides $a$ and $a\prime$, $b$ and
$b\prime$ must be identified to obtain a torus with two boundary components.}%
\end{figure}
%EndExpansion

The Figure 5 shows a symmetric semi-garden corresponding to a surface $P$ of
genus one with two boundary components. The orientation reversing involution
of $P$ is given by the glide-reflection of axis $r$ and vector $v$. The
non-orientable garden given by the symmetric semi-garden has associated a
surface of genus two and the orientation reversing involution has one contour
as fixed point set (one oval).

\subsection{Non-oriented separated garden}

%\subsection{Non-oriented simplest and separated garden}
A \textit{elementary non-orientable garden} is the Riemann sphere with the
antipodal involution $\tau:z\rightarrow-\bar{z}^{-1}$, a given integer
\textquotedblleft length\textquotedblright\ of the real cycle, and a given
colouring of the hemispheres into which the real cycle splits the Riemann
sphere. Consider a pair of elementary non-orientable gardens $\{(\mathcal{G}%
_{i},\tau_{i})|\;i=0,1\}$ with equal lengths of their real cycles. Take the
identical isomorphisms $\phi_{i}:(\mathcal{G}_{i},\sigma_{i})\rightarrow
(\mathcal{G}_{1-i},\sigma_{1-i})$ and the involution $\sigma:=\phi_{i}\tau
_{i}$ of $\mathcal{G}_{0}\cup\mathcal{G}_{1}$.
%generated by it.
The obtained structure $(\mathcal{G}_{0}\cup\mathcal{G}_{1},\sigma)$ is called
a \textit{non-oriented separated garden}.

A separated gardens is an oriented or non-oriented separated garden.

\subsection{Parks}

A \emph{park} is a structure consisting of

\noindent(i) (orientable and non-orientable) gardens and (oriented and
non-oriented) separated gardens;

\noindent(ii) entrances and exits;

\noindent(iii) alleys,

\noindent which we shall define below.

An \emph{entrance} (resp. \emph{exit}) is a point coloured white (resp. black)
and endowed with a non-negative integer called \emph{weight}. By an
\emph{alley} we call a path connecting an entrance or an exit with a face of a
garden or a separated garden.

\medskip A \emph{park} satisfies the following conditions:

\noindent(i) alleys connect entrances to white faces and exits with black faces;

\noindent(ii) each face has exactly one alley;

\noindent(iii) at least one alley is connected to each entrance or exit;

\noindent(iv) there exists an involution sending entrances to exits
(preserving weights), alleys to themselves and acting on gardens and separated
gardens as their associated involutions.

Note that the information on the number of exits, alleys from exits and its
weights may be deduced from the corresponding concepts on entrances and
vice-versa. We maintain the two types of elements by aesthetical reasons.

Two parks are called \emph{isomorphic} if there exist homeomorphisms of the
associated surfaces to the gardens and bijections between the sets of
entrances and exits preserving all elements and relations of the structures.

The Figure 6 shows a park constructed from two non-orientable gardens as the
one in Figure 5. In the Example 3 of the next section we describe the generic
real meromorphic functions with the topological type represented by this park.%

%TCIMACRO{\FRAME{ftbpFU}{4.7167in}{2.3756in}{0pt}{\Qcb{An example of park}}%
%{}{figpark.eps}{\special{ language "Scientific Word";  type "GRAPHIC";
%maintain-aspect-ratio TRUE;  display "USEDEF";  valid_file "F";
%width 4.7167in;  height 2.3756in;  depth 0pt;  original-width 4.6639in;
%original-height 2.3359in;  cropleft "0";  croptop "1";  cropright "1";
%cropbottom "0";  filename '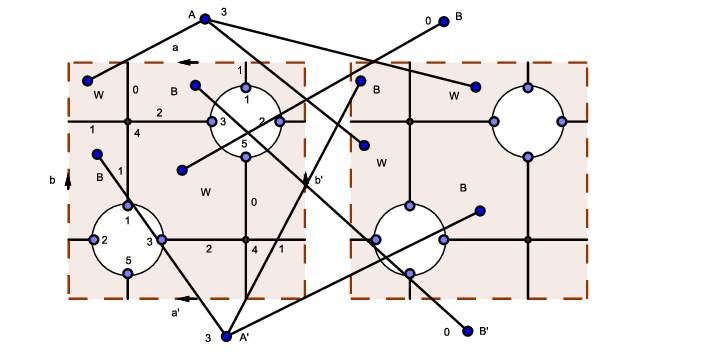';file-properties "XNPEU";}} }%
%BeginExpansion
\begin{figure}[ptb]%
\centering
\includegraphics[
height=2.3756in,
width=4.7167in
]%
{}%
\caption{An example of park}%
\end{figure}
%EndExpansion

%%%%%%%%%%%%%%%%%%%%%%%%%%%%%%%%%%%%%%%%%%%%%%%%%%%%%%%%%%%%%

\section{Park corresponding to a real meromorphic function}

\label{sec3}

Recall that a \emph{real algebraic curve} is a pair $(P,\tau)$ consisting of a
compact Riemann surface $P$ and an anti-holomorphic involution $\tau: P\to P$.
A real meromorphic function on $(P,\tau)$ is a holomorphic function $f:
P\to\ov{\mathbb{C}}$ such that for any $p\in P$, $f(\tau(p))=\ov{f(p)}$. A
real meromorphic function $f$ is called \emph{generic} if (i) all critical
points $p\in P$ have degree 2, and (ii) {if critical points $p_{1}\neq
p_{2}\in P $ have the same image $z=f(p_{1})=f(p_{2})$, then $z\in
\overline{\mathbb{R}}$ and $\tau(p_{1})=p_{2}$.}

\medskip Let us now associate a park to any generic real meromorphic function.

\subsection{Associated gardens}

The real cycle $\ov{\mathbb{R}}\subset\ov{\mathbb{C}}$ splits the Riemann
sphere into the hemispheres $\mathbb{C}_{+}$ and $\mathbb{C}_{-}$ with
positive and negative imaginary part respectively. We will call them resp. the
\emph{white} and the \emph{black} hemispheres.

Consider a symmetric neighbourhood of a real cycle $U\supset\ov {\mathbb{R}}$
(i.e., $\ov {U}=U$) which contains only real critical values. Its boundary
$\partial U$ consists of the contours $\mathcal{C}_{+}\subset\mathbb{C}_{+}$
and $\mathcal{C}_{-}\subset\mathbb{C}_{-}$.

Take the preimage $f^{-1}(U)$. Contracting each connected component of {the
boundary of the preimage} to a point, we obtain a finite family $\Phi$ of
{compact} connected Riemann surfaces. The preimage $f^{-1}(\ov {\mathbb{R}})$
splits each of them into simply-connected domains. {The closures of these
domains are called \textit{faces}.} Each face contains exactly one point
obtained as the result of the contraction of a {component in $f^{-1}%
(\partial{\ U})$}. Let us colour the face white if this point is obtained by
contracting of the inverse image of $f^{-1}(\mathcal{C}_{+})$ and black if it
is obtained by contracting $f^{-1}(\mathcal{C}_{-}).$

{Contracting} each of the contours $\mathcal{C}_{\pm}$ to a point, we get a
sphere $\ov U$ with two labelled points $z_{\pm}$. The real cycle splits it
into the white and the black hemispheres. Function $f$ induces the branched
covering $\bar f$ of $\ov U$ by the family $\Phi$ which maps white faces to
the white hemisphere and black faces to the black one. Points $z_{\pm}$ will
be the only {\ non-real critical values}.

{A critical point with a real value will be called a \textit{vertex}. The
cyclic order of critical values generates a cyclic enumeration of the vertices
(or pair of vertices in the case of non-orientable gardens). Observe that
vertices lie on the union of the boundaries of faces and they divide the
boundary of each face into curve segments called \textit{edges}.} The
restriction of $f$ to any edge $r$ has no critical points and it maps $r$ in
$\ov{\mathbb{R}}.$ The degree of this restricted map, i.e., the minimal
cardinality of the preimage of a point of $\ov{\mathbb{R}}$ by $f$ is called
the {\textquotedblleft\textit{length}"} of the edge $r$.

Involution $\tau$ sends the family $\Phi$ to itself interchanging the colours
of all faces and preserving the rest of the structure.

\begin{lemma}
\label{lm1} If a surface $P\in\Phi$ is invariant under $\tau$ (except for the
colour change), then it forms a garden. The rest of components split into
pairs of separated gardens.
\end{lemma}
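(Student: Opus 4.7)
The plan is to analyze how $\tau$ acts on the connected components of $f^{-1}(U)$ and transport this action through the boundary contraction that produces $\Phi$. Since $\ov U=U$ and $f\circ\tau$ equals the complex conjugate of $f$, the set $f^{-1}(U)$ is $\tau$-invariant, and because $\tau$ is a homeomorphism it permutes the connected components of $f^{-1}(U)$; the contraction of boundary circles is $\tau$-equivariant, so $\tau$ likewise permutes the components of $\Phi$. Each component of $\Phi$ is therefore either $\tau$-invariant, producing the garden case, or is paired with a distinct conjugate component, producing the separated-garden case.

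For a $\tau$-invariant $P\in\Phi$ I would verify the axioms of a (possibly non-orientable) garden in sequence by pull-back under $f$. The chords and contours arise from $f^{-1}(\ov{\mathbb{R}})\cap P$: arcs whose endpoints are real critical points supply the chords, while closed loops disjoint from all vertices supply the simple contours. The black/white coloring is pulled back from the coloring of the hemispheres $\mathbb{C}_\pm$ of $\ov U\setminus\ov{\mathbb{R}}$, with the color assigned to the contraction point in each face being the color of the hemisphere that the face covers. The length of an edge is the well-defined degree of $f$ restricted to it. The cyclic order on vertices, or on $\tau$-pairs of vertices, is inherited from the cyclic order of the corresponding real critical values on $\ov{\mathbb{R}}$. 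For the four-valent vertex condition, the generic hypothesis forces every real critical point to have local form $z\mapsto z^2$, so that $f^{-1}(\ov{\mathbb{R}})$ near a vertex consists of two smooth arcs crossing transversely and producing exactly four edges.

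The involution $\tau|_P$ is orientation-reversing, swaps face colors (since complex conjugation swaps $\mathbb{C}_\pm$), and preserves edge lengths. If $\tau|_P$ has fixed points on $P$, the fixed locus is a union of smooth circles, the quotient $P/\tau|_P$ is orientable with boundary, and $P$ is recovered as the union of two conjugate semi-gardens glued along their common boundary, i.e., an orientable garden in the sense of the definition. If instead $\tau|_P$ acts freely, we are precisely in the setting of the non-orientable garden definition. For a non-invariant component $P$, its conjugate $\tau(P)$ is a distinct component; each separately carries the semi-garden structure pulled back from $\ov f$, and $\tau$ provides the required color-swapping identification between them, so the pair forms a separated garden.

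The main obstacle I expect is the verification of the two most rigid combinatorial requirements: that every vertex is precisely four-valent with transverse crossings, and that every face is simply connected. The first point relies on combining clauses (i) and (ii) in the definition of a generic real meromorphic function, clause (ii) being essential to exclude higher-order confluence of branchings at a common real critical value. The second requires analyzing the restriction of $\ov f$ to $P$ as a branched cover of $\ov U$ whose only critical values are the two labelled points $z_\pm$; since these lie in the interiors of their respective hemispheres and become the contraction points of the faces, the covering is unbranched over the open hemispheres of $\ov U$, forcing each face to be homeomorphic to a closed disk.
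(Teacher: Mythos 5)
The paper offers no written proof of Lemma~\ref{lm1}: it is stated as an immediate consequence of the preceding remark that $\tau$ permutes the family $\Phi$ while interchanging colours and preserving the rest of the structure. Your reconstruction of most of the content is sound and considerably more careful than the paper: the $\tau$-equivariance of $f^{-1}(U)$ and of the boundary contraction, the pull-back of the colouring, the lengths and the cyclic order, the four-valence of vertices from the local model $z\mapsto z^2$ guaranteed by clauses (i)--(ii) of genericity, the simple-connectedness of faces (each component of the preimage of an open half of $U$ is an unbranched cover of an annulus, and contracting one boundary circle gives a disk), and the pairing of non-invariant components into separated gardens are all correct.

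The genuine gap is in your final dichotomy for a $\tau$-invariant component $P$. You assert that if $\tau|_P$ has fixed points then $P/\tau|_P$ is orientable, so $P$ is an orientable garden, and that the non-orientable garden case is exactly the fixed-point-free case. Both halves are wrong relative to the paper's definitions. An orientation-reversing involution of a closed orientable surface with non-empty fixed locus can have a non-orientable quotient, because the fixed ovals need not separate: a genus-one surface with a single fixed oval has quotient a M\"obius band, and such components do occur for functions on non-dividing real curves. Conversely, the paper's non-orientable garden is built from a symmetric semi-garden \emph{with boundary} by identifying $\tau$-paired boundary points, and after this identification the induced involution on the closed surface \emph{does} have fixed points, namely the image of that boundary; so ``$\tau|_P$ acts freely'' is not the right characterization either. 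The correct criterion is the connectivity of $P\setminus\mathrm{Fix}(\tau|_P)$: if it is disconnected into two halves interchanged by $\tau$, one obtains two conjugate semi-gardens glued along $\mathrm{Fix}(\tau|_P)$, i.e.\ an orientable garden; if it is connected, one cuts $P$ along $\mathrm{Fix}(\tau|_P)$ to get a surface with boundary on which $\tau$ acts freely (each fixed oval produces two boundary circles interchanged by $\tau$), i.e.\ a symmetric semi-garden and hence a non-orientable garden. As written, your argument misclassifies the components with non-empty fixed locus and non-orientable quotient, so it does not establish the lemma in full; replacing the ``fixed points vs.\ no fixed points'' test by the ``separating vs.\ non-separating fixed locus'' test repairs it.
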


\subsection{Associated entrances, exits and alleys}

By an \emph{entrance} (resp. \emph{exit}) we call a connected component of the
inverse image $f^{-1}( \mathbb{C}_{+}\setminus U)$ (resp. $f^{-1}(
\mathbb{C}_{-}\setminus U)$). The weight of an entrance (resp. exit) is the
genus of this connected component. Connected component of the inverse image
$f^{-1}(\partial U)$ is a contour separating an entrance or an exit from a
face of a garden or a separated garden of the same colour. We assign to this
connected component its alley (i.e., a path) connecting the entrance/exit and
the face which it separates.

\medskip The next statement is obvious.

\begin{theorem}
\label{th1} Gardens, separated gardens, and alleys of a generic meromorphic
function $f$ form a park $\PP(f)$ which we call the park of $f$. This park is
determined uniquely up to an isomorphism.
\end{theorem}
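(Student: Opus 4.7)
Since the gardens and separated gardens have already been supplied by Lemma~\ref{lm1}, the argument reduces to two tasks: (a) verifying that the collection of entrances, exits and alleys together with these gardens satisfies the four axioms of a park from Section~\ref{sec2}, and (b) showing that the resulting park is independent, up to isomorphism, of the auxiliary symmetric neighbourhood $U \supset \ov{\mathbb{R}}$ chosen in the construction.

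For (a), each axiom can essentially be read off from the construction. Away from the real critical values, the map $f$ is unbranched over $\partial U$, so every connected component of $f^{-1}(\partial U)$ is a smooth simple closed curve. Each such component bounds on one side a face of a garden or separated garden, and on the other side a connected component of $f^{-1}(\ov{\mathbb{C}}\setminus U)$, which is by definition an entrance or an exit. Since $\mathcal{C}_+\subset\mathbb{C}_+$ is contracted to the marked point of a white face and is, in the image, also the boundary of $\mathbb{C}_+\setminus U$, each alley joins a white face to an entrance (and dually for exits): this gives axiom (i). The bijection between boundary components and faces yields axiom (ii), and axiom (iii) holds since every entrance or exit, being an open subsurface, has at least one boundary component. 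Finally for axiom (iv), the anti-holomorphic involution $\tau:P\to P$ intertwines with complex conjugation on the target and therefore swaps $f^{-1}(\mathbb{C}_+\setminus U)$ with $f^{-1}(\mathbb{C}_-\setminus U)$, i.e.\ entrances with exits; it permutes the boundary components of $f^{-1}(\partial U)$, hence maps alleys to alleys; and its restriction to the family $\Phi$ is precisely the involution used in Lemma~\ref{lm1} to define the garden and separated-garden involutions.

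For (b), given two admissible symmetric neighbourhoods $U_0$ and $U_1$ of $\ov{\mathbb{R}}$ (each containing only the real critical values of $f$ and invariant under conjugation), I would connect them by a smooth path $\{U_s\}_{s\in[0,1]}$ of admissible symmetric neighbourhoods; the space of such choices is contractible, so this can always be done. Since no $\partial U_s$ meets the critical values of $f$, the family $\{f^{-1}(U_s)\}$ is a $\tau$-equivariant ambient isotopy, and the contraction of the boundary components to points is compatible with this isotopy. The edge ``length'' of an arc of $f^{-1}(\ov{\mathbb{R}})$ is the local degree of $f$ on that arc, which depends only on $f$, and the cyclic order of vertices is the pullback of the cyclic order of the real critical values on $\ov{\mathbb{R}}$; both are preserved by the isotopy. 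Hence the parks associated with $U_0$ and $U_1$ are canonically isomorphic, establishing uniqueness of $\PP(f)$ up to isomorphism.

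The main thing to be careful about, and really the only non-tautological step, is the compatibility of the involution on the non-orientable gardens, where the cyclic enumeration is an enumeration of $\tau$-orbits of vertices rather than of vertices themselves; once one records that the cyclic order comes from the cyclic order of real critical values downstairs (and that these critical values naturally come in conjugate pairs identified by $conj$), this compatibility is automatic, confirming the authors' remark that the statement is essentially obvious.
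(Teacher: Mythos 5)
Your proposal is correct and is exactly the verification the paper has in mind: the paper itself offers no argument beyond the remark ``The next statement is obvious,'' and your check of the four park axioms plus the independence of the choice of the symmetric neighbourhood $U$ is the natural way to make that remark precise. Nothing in your argument conflicts with the construction in \S 3, and the point you single out about the cyclic enumeration of $\tau$-orbits of vertices in the non-orientable case is indeed the only place where care is needed.
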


\smallskip\noindent\textbf{Example 2.} To illustrate the two types of separate
gardens, we give an example of a generic real meromorphic function and its
associated park. Take a generic meromorphic function of degree four on
$(T,\tau),$ where $T$ is a torus and $\tau$ is an anti-conformal involution of
$T$ without fixed points (i.e., $T/\left\langle \tau\right\rangle $ is a Klein
bottle). Observe that such a meromorphic function has $8$ {critical values}.
Its park consists of:

\noindent(i) two separated gardens: one oriented $(\mathcal{G}_{0}^{or}%
\cup\mathcal{G}_{1}^{or},\sigma)$ and one non-oriented $(\mathcal{G}_{0}%
^{nor}\cup\mathcal{G}_{1}^{nor},\tau)$;

\noindent(ii) two entrances $N_{1}$, $N_{2}$ and two exits $X_{1}$, $X_{2}$,
all four entrances/exits being of genus $0$.

\noindent(iii) Denote by $B\mathcal{G}_{i}^{or}$, $B\mathcal{G}_{i}^{nor}$ and
$W\mathcal{G}_{i}^{or}$, $W\mathcal{G}_{i}^{nor}$ the black and the white
faces of the separated gardens respectively. There are three alleys from
$N_{1}$ to $W\mathcal{G}_{0}^{nor}$, $W\mathcal{G}_{1}^{nor}$ and
$W\mathcal{G}_{0}^{or}$ and three alleys from $X_{1}$ to $B\mathcal{G}%
_{0}^{nor}$, $B\mathcal{G}_{1}^{nor}$ and $B\mathcal{G}_{1}^{or}$, one alley
from $X_{2}$ to $B\mathcal{G}_{0}^{or}$ and one alley from $N_{2}$ to
$W\mathcal{G}_{1}^{or}$.

The involution of the park sends $N_{i}$ to $X_{i}$ and%

\[
(W\mathcal{G}_{i}^{or},B\mathcal{G}_{i}^{or},W\mathcal{G}_{i}^{nor}%
,B\mathcal{G}_{i}^{nor})\rightarrow(B\mathcal{G}_{1-i}^{or},W\mathcal{G}%
_{1-i}^{or},B\mathcal{G}_{1-i}^{nor},W\mathcal{G}_{1-i}^{nor}).
\]

(See Figure 7.)%

%TCIMACRO{\FRAME{ftbpFU}{5.1802in}{2.4474in}{0pt}{\Qcb{A park with two
%separated gardens}}{}{figex3.eps}{\special{ language "Scientific Word";
%type "GRAPHIC";  maintain-aspect-ratio TRUE;  display "USEDEF";
%valid_file "F";  width 5.1802in;  height 2.4474in;  depth 0pt;
%original-width 5.1249in;  original-height 2.4059in;  cropleft "0";
%croptop "1";  cropright "1";  cropbottom "0";
%filename '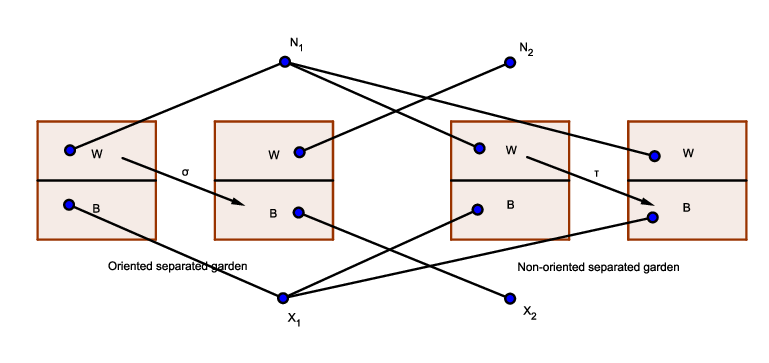';file-properties "XNPEU";}} }%
%BeginExpansion
\begin{figure}[ptb]%
\centering
\includegraphics[
height=2.4474in,
width=5.1802in
]%
{}%
\caption{A park with two separated gardens}%
\end{figure}
%EndExpansion

\medskip\noindent\textbf{Example 3.} Consider a generic rational function of
degree $3$. There is a unique topological type of such functions with four
critical values. Let us describe some topological types of real meromorphic
functions in the Hurwitz space $H_{0,3}$. There are three possibilities.
Namely, either all the critical values lie on the real cycle, or only two
critical values are on the real cycle, or, finally, there are no real critical
values at all.

A park for a generic real meromorphic function with four real critical values
consists of a garden which is a sphere with two conjugate semi-gardens being
disks with two chords. There are six faces (three white and three black), each
one with an alley to an entrance/exit of genus $0$ (see Figure 8.)%

%TCIMACRO{\FRAME{ftbpFU}{6.2993in}{2.9784in}{0pt}{\Qcb{Example of a generic
%real rational function of degree three }}{}{figex4.eps}%
%{\special{ language "Scientific Word";  type "GRAPHIC";
%maintain-aspect-ratio TRUE;  display "USEDEF";  valid_file "F";
%width 6.2993in;  height 2.9784in;  depth 0pt;  original-width 6.2379in;
%original-height 2.9352in;  cropleft "0";  croptop "1";  cropright "1";
%cropbottom "0";  filename '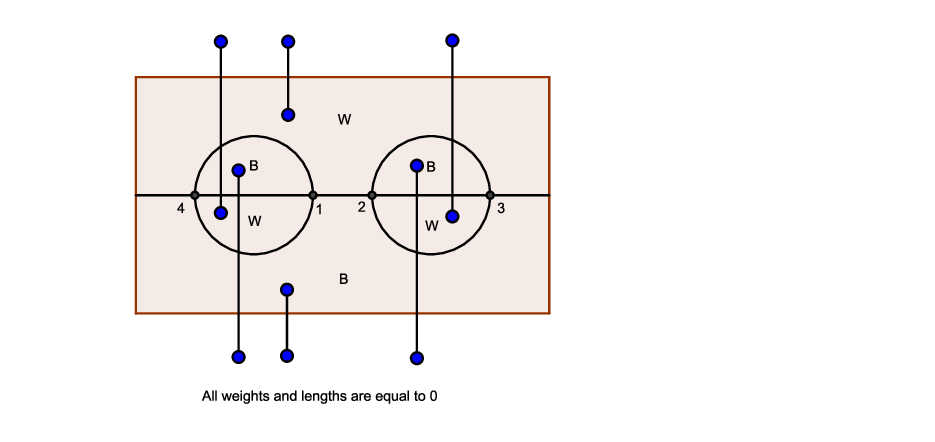';file-properties "XNPEU";}} }%
%BeginExpansion
\begin{figure}[ptb]%
\centering
\includegraphics[
height=2.9784in,
width=6.2993in
]%
{}%
\caption{Example of a generic real rational function of degree three }%
\end{figure}
%EndExpansion

A park of generic real meromorphic functions with two real critical values is
as follows. Its semi-garden consists of a disk and a chord joining two points
on the boundary. The corresponding orientable surface of the orientable garden
is a sphere. There is an alley in each face (there are four faces) ending at
an entrance/exit of genus $0$. The lengths of the three edges of each
semi-garden are $0,0$ and $1$.

Finally, a park of a generic real meromorphic function without real critical
values has a garden which is the elementary orientable garden with length $3$
of its side. It has one entrance/exit, each one of genus $0$, joined by an
alley with the corresponding white/black face. A different park for this case
may be constructed with a elementary orientable garden, an oriented separated
garden and four entrances/exits all of them of genus $0$. The edges are three
contours of length $1$.
%%%%%%%%%%%%%%%%%%%%%%%%%%%%%%%%%%%%%%%%%%%%%%%%%%%%%%%%%%%%

\medskip\noindent\textbf{Example 4.} The park in Figure 6 corresponds to
generic real meromorphic functions from surfaces of genus $13$, with $10$ real
critical values. The orientation reversing involution has two Jordan closed
curves as fixed point set and the quotient surface is non-orientable.

\section{Real meromorphic function corresponding to a park}

\label{sec4}

The main goal of this section is to prove the following result.

\begin{theorem}
\label{th2} For any park $\mathcal{P}$, there exists a generic real
meromorphic function $f$ whose park $\mathcal{P}(f)$ is isomorphic to
$\mathcal{P}$.
\end{theorem}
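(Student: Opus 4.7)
The strategy is to reverse the construction of \S\ref{sec3}: starting from the abstract combinatorial data of $\mathcal{P}$, I would first build a branched cover over a symmetric neighbourhood of the real cycle from the gardens and separated gardens, then cap it off with branched covers of the two open hemispheres coming from the entrances and exits, and finally check that the assembled object is a generic real meromorphic function whose associated park is $\mathcal{P}$.

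First I would fix the target: the Riemann sphere $\ov{\mathbb{C}}$ with the standard conjugation, choose a cyclically ordered set of real points equal in number to the vertices of $\mathcal{P}$ (pairs of vertices interchanged by the involution in a non-orientable garden contributing a single real critical value), and a closed symmetric neighbourhood $U$ of $\ov{\mathbb{R}}$ whose boundary $\mathcal{C}_{+}\cup\mathcal{C}_{-}$ encloses exactly these chosen real critical values. Over $U$ I would assemble the preimage piece by piece from the gardens. Each face of a semi-garden is declared to be a closed simply connected domain mapping homeomorphically to the corresponding hemisphere of $U$, and each edge of length $\ell$ is mapped as an $\ell$-fold branched cover of the corresponding real arc in $\ov{\mathbb{R}}$, with the vertices as simple branch points; gluing faces along common edges according to the incidence relations of the semi-garden produces a Riemann surface with boundary endowed with a holomorphic map to $U$. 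For a pair of conjugate semi-gardens forming an orientable garden, the complex structure on one half and its complex-conjugate on the other combine to give an antiholomorphic involution; for a non-orientable garden, the given fixed-point-free involution $\tau$ plays the same role; for separated gardens one simply takes two copies and identifies them via the prescribed swap $\sigma$.

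Second, for each entrance (respectively exit) of weight $g$, I would produce a compact Riemann surface of genus $g$ with boundary admitting a holomorphic branched cover of the closure of $\mathbb{C}_{+}\setminus U$ (resp.\ $\mathbb{C}_{-}\setminus U$) branched only over a single interior point, and whose boundary circles together with their covering degrees match the collection of contours of $f^{-1}(\mathcal{C}_{\pm})$ prescribed by the alleys attached to that entrance or exit. The existence of such covers is a standard consequence of the Riemann Existence Theorem: given a collection of boundary monodromies in a symmetric group and a single free interior branch point, the surface is uniquely determined by the product of these permutations, and the genus can be tuned by prescribing sufficiently many transpositions over the interior point (subject to the obvious Riemann--Hurwitz constraint). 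Gluing these caps to the cover over $U$ along the alley-matched boundary contours produces a compact Riemann surface $P$ with a holomorphic map $f:P\to\ov{\mathbb{C}}$.

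Finally, the park involution of $\mathcal{P}$ specifies how to assemble the local involutions (conjugation between conjugate semi-gardens, the given $\tau$ on non-orientable gardens, the swap on separated gardens, and the exchange of entrances and exits) into a single antiholomorphic involution $\tau_{P}$ on $P$ satisfying $f\circ\tau_{P}=\ov f$. By construction every critical point of $f$ is of order two, distinct non-real critical values are automatic (perturb the caps if needed), and real critical values are shared by at most two branchings exchanged by $\tau_{P}$, so $f$ is generic. Running the construction of \S\ref{sec3} on $f$ recovers $\mathcal{P}$ up to isomorphism. I expect the main obstacle to be the capping step for entrances and exits: one must realise compact bordered Riemann surfaces of prescribed genus as branched covers of a disk with both interior and boundary behaviour specified, and do so compatibly with the park involution so that a cap over an entrance is glued to the $\tau_{P}$-conjugate cap over the paired exit. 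Verifying the Riemann--Hurwitz compatibility, choosing the interior branching to hit the right genus, and checking that the reconstructed park is genuinely isomorphic to $\mathcal{P}$ (not merely combinatorially equivalent) constitute the technical core of the argument.
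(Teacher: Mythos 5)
Your proposal follows essentially the same route as the paper: build a topological branched cover over a symmetric neighbourhood of the real cycle from the (semi-)gardens using the vertex enumeration and edge lengths, cap it with entrance/exit surfaces whose genus equals the weight and whose boundary contours are glued along the alleys, transport the park involution to an antiholomorphic involution, and invoke the fact that a topological branched covering of $\ov{\mathbb{C}}$ compatible with an orientation-reversing involution carries a unique complex structure making it a (real) meromorphic function. One wording slip worth fixing: the faces of the gardens do not in general map homeomorphically to the hemispheres --- each face maps with a degree determined by its edge lengths, with at most one critical point in its interior (which is exactly where the alley's boundary contour sits) --- but the rest of your construction already accounts for this.
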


\begin{proof}
For any finite branched covering $f: P \to\ov {\mathbb{C}}$ of the Riemann
sphere $\ov {\mathbb{C}}$ by an orientable closed $2$-manifold $P$, there
exists a unique holomorphic structure on $P$ such that $f$ becomes a
meromorphic function, see \cite{K}. Analogously, for any orientation-reversing
involution $\tau: P \to P$ of an orientable closed $2$-manifold $P$ and a
finite branched covering $f: P\to\ov {\mathbb{C}}$ such that $f(\tau
(p))=\ov f(p)$ for any $p\in P$, there exists a unique complex structure on
$P$ which makes $f$ a real meromorphic function, see \cite{N2,N3}. Thus to
obtain a generic real meromorphic function realizing a given park
$\mathcal{P},$ it suffices to construct a topological model of this function.

\medskip We shall make the construction for parks with orientable gardens. The
case for non-orientable gardens is similar.

\medskip We start with a topological model of a function realizing a garden
{with $n$ vertices}. {Consider the Riemann sphere with $n$ points on its real
cycle with cyclic enumeration. The real cycle divides the Riemann sphere into
a white and a black disks. By definition, a garden is a $2$-dimensional
surface split into simply-connected {white and black} faces.} The boundary of
each face is split by labelled vertices into edges of a given integer
\textquotedblleft length\textquotedblright. We will construct a branched
covering, {which maps any face to the disk of the same colour and has at most
one critical point inside each face}. To obtain such a covering, it suffices
to determine the mapping of the boundary of each face to the real cycle
$\ov{\mathbb{R}}$. (This mapping of the boundary is uniquely determined by the
coincidence of the enumerations of the vertices of the face with the cyclic
enumeration of points on $\mathbb{R}$ together with the coincidence of
\textquotedblleft length\textquotedblright\ of every edge with the degree of
the mapping on this edge.)

Mappings of elementary and separated gardens are constructed similarly. By the
\emph{degree} of a face we call the degree of the resulting mapping of this
face to a disk. The latter degree will be called the \emph{degree of the
alley} related to the face under consideration. This degree may be read from
the park as the sum of the lengths of the edges of the face where the alley ends.

Now consider a neighbourhood of the real cycle $\ov {\mathbb{R}}\subset
U\subset\ov{\mathbb{C}}$ symmetric with respect to the complex conjugation and
such {that it contains only real critical values of the constructed map}. Its
complement \textbf{$\mathbb{C}\setminus U$} splits into the {white and black}
disks $D_{+}$ and $D_{-}$. The preimage of $U$ with respect to the constructed
branched covering consists of gardens such that every face has exactly one
hole. We call them \emph{gardens with boundary}.

Now we shall construct maps corresponding to entrances and alleys starting at
them. Associate to an entrance its \emph{entrance surface} whose genus equals
to the weight of the entrance and the number of holes equals to the number of
alleys. Consider a generic covering of this surface over $D_{+}$ which maps
the boundary components corresponding to the alleys to $\partial D_{+}$ in
such a way that the degree of the (unbranched) covering equals the degree of
the alley. The number of critical points of such a covering (the latter number
being determined by the Riemann-Hurwitz formula) is called the \emph{index of
the entrance}.

The boundary points of gardens with boundary and the boundary points of
entrance surfaces are called \emph{related} if their images on $\partial
D_{+}$ with respect to constructed coverings coincide. {Now glue together the
boundary components of the alleys with the corresponding boundary components
of the gardens with boundary by homeomorphisms which identify related points.}
{Using the involution, we do the same with the exit surface.}

As a result, we obtain a branched covering with an orientation-reversing
involution whose corresponding real meromorphic function has a park isomorphic
to $\mathcal{P}$.
\end{proof}

\section{Parks as topological types of generic real meromorphic functions}

\label{sec5}

We now settle the following statement.

\begin{theorem}
\label{th:3} Two generic real meromorphic functions with isomorphic parks are
topologically equivalent. The set $H(\mathcal{P})$ of all generic real
meromorphic functions with a given (up to isomorphism) park $\mathcal{P}$ is a
single connected component in the space of generic meromorphic functions. In
other words, in the space $\mathbb{R} H^{0}$ of generic real meromorphic
functions there exists a deformation transforming one generic real meromorphic
function with a given park $\mathcal{P}$ to any other generic real meromorphic
function with the same park.
\end{theorem}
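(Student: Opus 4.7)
The statement splits into two claims: (a) isomorphic parks imply topological equivalence of the corresponding generic real meromorphic functions, and (b) $H(\mathcal{P})$ is both open and connected in $\mathbb{R} H^0$. My plan is to deduce (a) directly from the construction of Theorem \ref{th2}, and then to build (b) on top of (a) by combining a local constancy argument with a path construction in the space of critical value configurations.

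For (a), given $f_1,f_2$ with $\mathcal{P}(f_1)\cong\mathcal{P}(f_2)$, I would produce an equivariant homeomorphism $\Phi:P_1\to P_2$ intertwining $f_1$ and $f_2$ by reassembling the source surfaces piece by piece. Fix a symmetric neighbourhood $U\supset\ov{\mathbb{R}}$ as in \S\ref{sec3}, so that each $P_i$ decomposes, modulo the preimage of $\pa U$, into the ``garden part'' $f_i^{-1}(U)$ and the ``entrance/exit parts'' $f_i^{-1}(\ov{\mathbb{C}}\setminus U)$. The garden and separated garden data (cyclic order of vertices, edge lengths, black/white colouring, involution) determine the branched cover $f_i^{-1}(U)\to U$ up to equivariant homeomorphism by the same uniqueness argument used in the proof of Theorem \ref{th2}. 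Each entrance/exit component is a compact surface with boundary whose topology is fixed by the genus (the weight) and the number of boundary circles (one per alley); its covering degree on each boundary circle equals the degree of the corresponding alley. One then glues the pieces by boundary-compatible homeomorphisms, using the involution of the park to keep $\Phi$ equivariant under $\tau_1$ and $\tau_2$. This is the step that requires the most care: one must verify that the cyclic matching of vertices, the edge lengths, and the alley degrees on both sides of every glued circle coincide, so that the piecewise homeomorphisms can be chosen to agree on overlaps.

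For (b), I would show first that $H(\mathcal{P})$ is open and closed in $\mathbb{R} H^0$ by arguing that the park is locally constant. The defining conditions of $\mathbb{R} H^0$ are open, so under a sufficiently small perturbation of $f$ inside $\mathbb{R} H^0$, each non-real critical value stays non-real and isolated, each real critical value stays real, and each coincident pair (two critical points mapping to the same real value, interchanged by $\tau$) stays coincident (since breaking the coincidence would force a non-real pair to become real on one side, violating the $\tau$-symmetry of the perturbation). Hence the cyclic order of real critical values on $\ov{\mathbb{R}}$, the gardens, the entrances/exits with their weights, and the alleys are all invariant, so $f\mapsto\mathcal{P}(f)$ is locally constant. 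Second, I would show $H(\mathcal{P})$ is path connected by using (a): any two functions in $H(\mathcal{P})$ are topologically equivalent, and the standard deformation argument (move their critical values to a common configuration along $\tau$-symmetric paths in the configuration space of admissible critical value sets, which is path connected, then apply the equivariant isotopy extension theorem to the equivalence homeomorphism) produces a path in $\mathbb{R} H^0$ joining them with the park unchanged.

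The main obstacle will be step (a), specifically checking that the alley/boundary matchings induced by the park isomorphism are truly compatible across all pieces, including the equivariance on non-orientable gardens and on separated gardens (where pairs of conjugate components must be glued coherently). A secondary delicate point in (b) is the local constancy at coincident real critical values and the construction of the symmetric path of critical values: moving a coincident pair apart along $\ov{\mathbb{R}}$ without crossing another vertex must be shown to preserve the combinatorial type of the park, and the equivariant isotopy lift must avoid creating degenerate critical values along the way.
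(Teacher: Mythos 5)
Your decomposition of the source surface along $f^{-1}(\partial U)$ into a garden part and entrance/exit parts is exactly the decomposition the paper uses, but your argument has a genuine gap at the single point where the paper has to invoke a nontrivial theorem. For the entrance/exit pieces you assert that the genus (= weight), the number of boundary circles (= alleys), and the boundary degrees (= alley degrees) ``fix the topology'' of each piece, and you then glue boundary-compatible homeomorphisms. Matching the pieces as abstract surfaces is not enough: you must match them as \emph{branched coverings} of the disk $D_{+}$. The statement you actually need is that two branched coverings of a disk with only simple critical values, the same source genus $g_i$, the same number of boundary components $k_i$, the same total degree, and the same degrees $d_i^1,\dots,d_i^{k_i}$ on the boundary circles are topologically equivalent and, moreover, deformable into one another by a continuous motion of critical values. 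This is a Clebsch/Hurwitz-type transitivity statement for the braid-group action on the monodromy data of a disk covering with prescribed boundary behaviour; it is precisely the input the paper takes from Natanzon's work (\cite{N4,N2} and Chapter 3 of \cite{N3}), and it neither follows from the pieces being homeomorphic nor from the uniqueness argument for the garden part (where each face carries at most one critical point, so uniqueness is elementary). The same omission undermines your step (b): after transporting the critical values of $f_1$ to those of $f_2$, the two functions have the same park and the same critical values but need \emph{not} agree up to a source homeomorphism --- the Hurwitz numbers computed in \S 6 are generally larger than $1$ --- so the isotopy-extension argument alone does not join them; connecting the distinct monodromy classes again requires exactly the braid-transitivity result above. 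The paper avoids your detour through topological equivalence altogether: it deforms the entrance/exit coverings directly by the cited theorem, handles exits by the involution, and reads off topological equivalence as a corollary of connectedness.

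A secondary problem is your local-constancy claim for coincident pairs. A pair of $\tau$-conjugate critical points over a common real value can, under an arbitrarily small real perturbation, acquire a conjugate pair of non-real critical values (the critical value $w$ of one of them merely has to leave $\ov{\mathbb{R}}$; its partner automatically takes the value $\ov{w}$), and the perturbed function still satisfies both defining conditions of $\mathbb{R}H^{0}$. So the parenthetical justification you give is not valid, and the combinatorial type of the garden (which records these critical points as vertices) is not locally constant in the naive sense. The paper finesses this by defining the admissible deformations as those moving real critical values along $\ov{\mathbb{R}}$ and non-real ones as conjugate pairs off $\ov{\mathbb{R}}$, i.e., by working within the stratum where the type of the critical-value configuration is fixed; if you want to prove openness of $H(\mathcal{P})$ in $\mathbb{R}H^{0}$ you would have to confront this transition honestly rather than rule it out.
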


\begin{proof}
{We say that two generic real meromorphic functions are \textit{related} if
one can deform one to the other by means of a continuous change of real
critical values and pairs of complex conjugated non-real critical values. A
real meromorphic function defined by a disjoint union of gardens and separated
gardens is called \textit{simple}. Construction in \S ~\ref{sec4} associates
to any real meromorphic function $f$ a related class of simple real
meromorphic functions defined by gardens and separated gardens of the park of
$f$. Moreover, this related class is the same for two given functions if and
only if they have parks with isomorphic systems of gardens and separated
gardens.}

{A system of entrances and its alleys of a generic real meromorphic function
determines a set of ramified coverings with simple critical values
$\varphi_{i}:P_{i}\rightarrow D$ of degrees $d_{i}$, where $P_{i}$ is a
Riemann surface of genus $g_{i}$ with $k_{i}$ holes and $D$ is a disk.
Consider the degrees $d^{1}_{i},\dots,d^{k_{i}}_{i}$ of $\varphi
_{i}|_{\partial P_{i}}:\partial P_{i}\rightarrow\partial D$ on components
$c_{j}\subset\partial P_{i}$. It follows from \cite{N4,N2} and Chapter 3 of
\cite{N3} that any other covering $\varphi_{i}^{\prime}:P_{i}^{\prime
}\rightarrow D$ is obtained from $\varphi_{i}$ by a continuous change of
critical values if the topological types $(g_{i},k_{i}) $ and $(g_{i}^{\prime
},k_{i}^{\prime})$ respectively of $P_{i}$ and $P_{i}^{\prime} $ are the same,
the degrees $d_{i}$ and $d_{i}^{\prime}$ of $\varphi_{i}$ and $\varphi
_{i}^{\prime}$ are the same, and the degrees $d^{1}_{i},\dots,d^{k_{i}}_{i}$
and $d^{\prime1_{i}},\dots,d^{\prime k_{i}}_{i}$ on the boundary components
are the same.}

{The park of a generic real function defines the topological type
$\{(g_{i},k_{i})\}$ of the system of entrances and its alleys and the
corresponding degrees $d^{1}_{i},\dots,d^{k_{i}}_{i}$. Thus for functions with
isomorphic parks, there exists a continuous change of critical values
deforming one system of entrances and its alleys to the other. The
anti-holomorphic involution generates a continuous change of critical values
deforming one system of exits and its alleys to the other. Together with the
change of real critical values which we considered above, this generates a
continuous change of critical values transforming one real function to the
other.}
\end{proof}

\section{Hurwitz numbers}

\label{sec6}

{A Hurwitz number is, by definition, a weighted number of equivalence classes
of coverings with fixed critical values. Generic real meromorphic functions
belong to different equivalence classes if and only if they have different
topological types (i.e., non-equivalent parks). Thus it is enough to find a
Hurwitz number for any given topological type.}

{Consider the set $H(\mathcal{P})$ of all generic real meromorphic functions
with a given (up to an isomorphism) park $\mathcal{P}$. In \S ~\ref{sec4} we
constructed a decomposition of a generic real meromorphic function into a real
function defined on the system of gardens and separated gardens and a function
defined on surfaces generated by of entrances and exits with alleys. Thus the
the Hurwitz number for $H(\mathcal{P})$ is obtained by multiplication of the
Hurwitz number for the function corresponding to the gardens and separated
gardens by the Hurwitz number for the function defined on the surfaces
generated by the entrances and its alleys. (Notice that the function defined
on the surfaces generated by the entrances and its alleys determines the
function defined on the surfaces generated by the exits and its alleys.)}

{Main construction from \S ~\ref{sec4} demonstrates that the Hurwitz number
for functions corresponding to the gardens and separated gardens equals 1.
This fact means that to calculate the total Hurwitz number it suffices to
determine the Hurwitz number for the function defined on the system of
surfaces generated by the entrances and its alleys. This function, in its
turn, is a union of functions with simple critical values on surfaces $P_{i}$
of topological types $\{(g_{i},k_{i}):i=1,\dots,n\}$ and degrees $\{d_{i}%
^{j}:j=1,\dots,k_{i}\}$ on boundary components, see \S ~\ref{sec5}.}

{Collapsing the boundary components of $P_{i}$ and the boundary of the image
disk to a point, we obtain a map of the closed surface $\overline{P}_{i}$ to
the Riemann sphere which have only one non-simple critical value. The number
of its simple critical values is calculated by the Riemann-Hurwitz formula and
is equal to
\[
b_{i}=2g_{i}-2+k_{i}+\sum_{j=1}^{k_{i}}d_{i}^{j}.
\]
The Hurwitz number $H_{g_{i}}(d_{i}^{1},\dots,d_{i}^{k_{i}})$ for such a map
is equal to the Hurwitz number corresponding to the coverings of $P_{i}$ over
a disk and is called a \textit{single Hurwitz number}. These numbers appear in
different areas of mathematics from moduli spaces to integrable systems and
are well studied.}

{Now the Hurwitz number for the topological type given by all entrances and
its alleys is:
\[
\frac{(b_{1}+\dots+b_{n})!}{b_{1}!\dots b_{k}!}\prod_{i=1}^{n}H_{g_{i}}%
(d_{i}^{1},\dots,d_{i}^{k_{i}}).
\]
As we explained above, this number coincides with the Hurwitz number for
generic real meromorphic functions of type $\mathcal{P}$.}

\textbf{Example 5}. Consider the park $\mathcal{P}$ in Figure 2 where all the
lengths of sides are $0$. There is only one entrance of genus $3$ with two
boundary components of degree $1$. The number $b$ is $8$ and the Hurwitz
number for the topological type of generic real meromorphic functions given by
$\mathcal{P}$ is:%
\[
H(\mathcal{P)=}\frac{8!}{8!}H_{3}(1,1)
\]

\vspace{2ex}

\section{Parks and monodromy of coverings}

\label{sec7}

The traditional way to describe coverings and meromorphic functions is by
using their monodromies. In the case of a degree $d$ generic meromorphic
function $f:P\rightarrow\ov{\mathbb{C}}$, the monodromy is a representation
$\omega:\pi_{1}(\ov{\bC}\setminus\{z_{1},...,z_{n}\})\rightarrow\Sym_{d}$,
where $\{z_{1},...,z_{n}\}$ is the set of critical values and $\Sym_{d}$ is
the symmetric group of permutations of $d$ elements.
%$\mathcal{P}% \{1,...,d\}$, and
For a generic meromorphic function, $\omega$ sends each based small loop
encircling a singular value to a transposition. The monodromy tell us how the
elements of $\pi_{1}(\ov{\bC}\setminus\{z_{1},...,z_{n}\})$ lift to the
surface $P$ and $\omega^{-1}(Stab(1))$ is isomorphic to the group $\pi
_{1}(P\setminus f^{-1}\{z_{1},...,z_{n}\})$. Two monodromy representations
correspond to topologically equivalent meromorphic functions if and only if
there exists an automorphism of $\pi_{1}(\ov {\mathbb{C}}\setminus
\{z_{1},...,z_{n}\})$ (given by an orientation preserving homeomorphism) and
an automorphism of $\Sym_{d}$ sending one representation to the other. In this
way, it is possible to prove that there exists a unique topological type of
degree $d$ generic meromorphic functions from Riemann surfaces of a fixed
genus (see \cite{C} and \cite{EEHS}).

The degree $d$ generic real meromorphic functions may be described by a degree
$d$ orbifold coverings $\widehat{f}:\widehat{P}\rightarrow D_{t,\overline{s}}$
where $D_{t,\overline{s}}$ is an orbifold with the topological type of a disc,
$t$ conic points of order two and $s$ corner points of angle $\pi/2$, in such
a way that if $P$ is the complex double of $P$ then $f:P\rightarrow
\ov {\mathbb{C}}$ is a generic meromorphic function with $2t+s=n$ critical
values, $s$ of them real. These coverings are given by a monodromy
representation:%
\[
\varpi:\pi_{1}O(D_{t,\overline{s}})\rightarrow\Sym_{d}=\Sym\{1,...,d\}.
\]

The group $\pi_{1}O(D_{t,\overline{s}})$ has a presentation:%
\[
\left\langle x_{1},...,x_{t},e,c_{1},...,c_{s+1}:\;x_{i}^{2}=c_{i}^{2}%
=(c_{i}c_{i+1})^{2}=1;\;x_{1}\dots x_{t}e=1;\;c_{1}=ec_{t+1}e^{-1}%
\right\rangle .
\]

Since $\varpi$ is given by a generic real meromorphic function, it satisfies
some specific properties:

\noindent1. the permutations $\varpi(x_{i})$ are transpositions;

\noindent2. the permutations $\varpi(c_{i}c_{i+1\operatorname{mod}s})$ are
transpositions or products of two (disjoint)\ transpositions;

\noindent3. every $\varpi(c_{i})$ is an order two element.

The entrances of the park are given by the orbits of the subgroup generated
by
\[
\left\langle \varpi(x_{1}),\dots,\varpi(x_{t})\right\rangle
\]
The white faces and the corresponding alleys are in $1-1$ correspondence with
the orbits of the element $\varpi(e)$. The face corresponding to an orbit of
$\varpi(e)$ is joined with the entrance which is the orbit of $\left\langle
\varpi(x_{1}),\dots,\varpi(x_{t})\right\rangle $ containing it.

The exits, black faces and corresponding alleys are obtained using the orbits
of
\[
\left\langle \varpi(c_{1}x_{1}c_{1}),\dots,\varpi(c_{1}x_{t}c_{1}%
)\right\rangle
\]
and the cycles of $\varpi(c_{1}ec_{1})$.

\bigskip

\textbf{Example 6}. We can apply Theorem 4 to know if two generic real
meromorphic functions given by its monodromies have or not the same
topological type. For example, let $f_{1}:$ $P_{1}\rightarrow\ov{\mathbb{C}} $
and $f_{2}:P_{2}\rightarrow\ov{\mathbb{C}}$ be two generic real meromorphic
functions of degree $d$ without real critical values from surfaces of a fixed
genus $g\geq2$. Let
\[
\varpi_{i}:\pi_{1}O(D_{t,\overline{0}})\rightarrow\Sym\{1,...,d\},i=1,2,
\]
be the monodromies of $\widehat{f}_{1}$ and $\widehat{f}_{2}$ respectively.
Assume $\varpi_{1}(e)$ and $\varpi_{2}(e)$ are $d$-cycles, i. e. have only an
orbit and the real involutions on $P_{1}$ and $P_{2}$ are separating. The
corresponding garden of each function is an elementary garden with length $d$
on its edge and there is only one entrance and one exit in each park. Hence
the parks are equivalent, $(P_{1},f_{1})$ and $(P_{2},f_{2})$ are
topologically equivalent

\end{document}